\newtheorem{thm}{Theorem}[section]
\newtheorem{cor}[thm]{Corollary}
\newtheorem{prop}[thm]{Proposition}
\theoremstyle{definition}
\newtheorem{defin}[thm]{Definition}
\numberwithin{equation}{section}
\begin{document}

\baselineskip=12.1pt

%%%%%%%%%%%%%%%%

\title{ The spectrum of a class of graphs derived from Grassmann graphs}

\author{S.Morteza Mirafzal*, Roya Kogani\\
Department of Mathematics \\
  Lorestan University, Khorramabad, Iran\\
E-mail: mirafzal.m@lu.ac.ir\\
E-mail: smortezamirafzal@yahoo.com\\
E-mail: Rkogani@yahoo.com}

\date{}

\maketitle

\renewcommand{\thefootnote}{}

\footnote{2010 \emph{Mathematics Subject Classification}: 05C50, 05C25 
}
\footnote{\emph{Keywords}:Grassmann graph, connected graph, spectrum.}

\footnote{\emph{*Coresponding author}: }

\footnote{\emph{Date}:  }

\renewcommand{\thefootnote}{\arabic{footnote}}
\setcounter{footnote}{0}
\date{}
%----------additions

\begin{abstract} Let $n,k$ be positive integers such that $n\geq 3$, $k < \frac {n}{2} $. Let $q$ be a power of a prime $p$ and $\mathbb{F}_q$ be a finite field of order $q$. Let $V(q,n)$ be a vector space of dimension $n$ over $\mathbb{F}_q$. We define the graph $S(q,n,k)$ as  a  graph with the vertex set $V=V_k \cup V_{k+1}$, where $V_k$ and $V_{k+1}$ are the family of subspaces in $V(q,n)$ of dimension $k$ and $k+1$ respectively,     in which two vertices $v$ and $w$ are adjacent whenever $v$ is a subspace of $w$  or $w$ is a subspace of $v$.
   It is clear that the graph $S(q,n,k)$ is a bipartite graph. In this paper, we study some properties of this graph. In particular,  we determine the spectrum of the graph $S(q,n,k)$.

\end{abstract}

\maketitle
\section{ Introduction and Preliminaries }  In this paper, a graph $\Gamma=(V,E)$ is
considered as an undirected simple graph where $V=V(\Gamma)$ is the vertex-set
and $E=E(\Gamma)$ is the edge-set. For all the terminology and notation
not defined here, we follow [2,3,4,5,6].\

A graph is distance-transitive if, for any two arbitrarily-chosen pairs of vertices at
the same distance, there is some automorphism of the graph taking the first pair
onto the second. Distance-transitive graphs have various nice properties which have been investigated by many researchers in abstract and applied graph theory.   Cycles, hypercubes, Johnson graphs, Hamming graphs and Grassmann graphs are the most known examples of the class of  distance-transitive graphs [4]. Hamming graphs, which hypercubes are a subclass of them, are also   important in coding theory and its applications [1].   In algebraic graph theory,  when we work on a family of graphs, we encounter  two main problems. The first problem is determining their  automorphism groups and the second problem is determining  their spectrums. Recently, the automorphism groups of the Hamming graphs are determined by a new and short proof [9]. From a given graph, we can construct some other graphs which may have some good desired properties. In this paper, we wish to determine the spectrum of a class of graphs which can be constructed and studied by Grassmann graphs. \

 Let $p$ be a positive prime integer and $q=p^m$ where $m$ is a positive integer. Let $n,k$ be positive integers and  $k <n$.  Let $V(q,n)$ be a vector space of dimension $n$ over the finite field $\mathbb{F}_q.$                                                                                                                                        Let $V_k$ be the family  of all
subspaces  of  $V(q,n)$ of dimension $k$.
Every element of $V_k$ is also called a $k$-subspace( or $k$-space ).
The Grassmann graph $G(q,n,k)$ is the graph with the  vertex-set $V_{k}$,  in which two vertices $u$ and $w$ are adjacent if and only if $\dim(u\cap w)=k-1$.\

Note that if $k = 1$, we have a complete graph, so we shall assume that $k >1 $.
 It is clear that  the number of vertices of the Grassmann graph $G(q,n,k)$, that is,
  $|V_k|$  is
the Gaussian binomial coefficient,
$${n\brack k}_q={n\brack k}= \dfrac{(q^{n}-1)(q^n-q)\cdots (q^{n}-q^{k-1})}{(q^{k}-1)(q^k-q)\cdots (q^k-q^{k-1})} =\dfrac{(q^{n}-1)\cdots (q^{n-k+1}-1)}{(q^{k}-1)\cdots (q-1)}.$$

Noting that ${n\brack k}_q={n\brack n-k}_q$,  it follows that $|V_k|=|V_{n-k}|$. It is easy to show that if $1 \leq i < j \leq \frac{n}{2}$, then $|V_i| < |V_j|$.  It can be shown that
    $G(q,n,k) \cong G(q,n,n-k)$ [3],  and  hence in the sequel we assume that $k \leq\frac{n}{2}$.

It is easy to see that  the distance between two vertices $v$ and $w$ in the graph $G(q,n,n-k)$   is $k-dim(v\cap w)$.
The Grassmann graph is a distance-regular graph of diameter $k$ [4].  Concerning the graph $G(q,n,n-k)$ we have the following fact.
\begin{thm}
 Let $V=V(q,n)$ be as above. Suppose $0 \leq i,j \le n$. Then \\
(i)  \ The number of $k$-spaces of $V$ is ${n\brack k}_q={n\brack k}$\\
(ii)  \ If $X$ is a $j$-space of $V$, then there are
precisely $q^{ij}{n-j \brack i}$ $i$-spaces $Y$ in $V $ such that $X \cap Y = 0$. \\
(iii)  \ If $X$ is a $j$-space of $V$, then there are
precisely $q^{(i-m)(j-m)}{n-j \brack i-m}{j \brack m}$ $i$-spaces $Y$ in $V $ such that $X \cap Y$ is an $m$-space.

\end{thm}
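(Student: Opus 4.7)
For part (i), the plan is the standard double-counting argument. I would count ordered $k$-tuples $(v_1,\ldots,v_k)$ of linearly independent vectors in $V(q,n)$ in two ways: directly, by choosing $v_t$ outside the span of its predecessors, producing $\prod_{t=0}^{k-1}(q^n-q^t)$ such tuples; and alternatively, noting that each $k$-space contributes exactly $\prod_{t=0}^{k-1}(q^k-q^t)$ ordered bases. The quotient of these counts yields the Gaussian binomial coefficient formula already displayed in the introduction.

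For part (ii) I would repeat the same template in the presence of the obstructing $j$-space $X$. Counting ordered $i$-tuples $(y_1,\ldots,y_i)$ with $y_t \notin X + \langle y_1,\ldots,y_{t-1}\rangle$, the number of admissible choices for $y_t$ is $q^n - q^{j+t-1}$, giving $\prod_{t=0}^{i-1}(q^n - q^{j+t})$ total tuples. Each valid $i$-space $Y$ still hosts $\prod_{t=0}^{i-1}(q^i - q^t)$ ordered bases, so dividing produces the desired count. The closed form $q^{ij}{n-j\brack i}$ emerges after pulling out $q^{j+t}$ from the numerator factors and $q^{t}$ from the denominator factors; the $q^{\binom{i}{2}}$ contributions cancel, leaving exactly $q^{ij}$ times the Gaussian coefficient ${n-j\brack i}$.

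For part (iii) I would reduce to (ii) by a quotient construction, which I expect to be the cleanest route. First, choose $W := X \cap Y$, which must be an $m$-subspace of $X$; there are ${j \brack m}$ such choices. Once $W$ is fixed, parametrize the remaining data by $\overline{Y} := Y/W$ inside $\overline{V} := V/W$. Then $\overline{Y}$ is an $(i-m)$-subspace of an $(n-m)$-dimensional ambient space, and the requirement $X \cap Y = W$ translates precisely into $\overline{Y} \cap \overline{X} = \{0\}$, where $\overline{X} := X/W$ has dimension $j-m$. Applying part (ii) inside $\overline{V}$ with parameters $(n-m,\, j-m,\, i-m)$ counts such $\overline{Y}$'s as $q^{(i-m)(j-m)}{n-j \brack i-m}$, and multiplying by the number of choices of $W$ gives the stated product.

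The main obstacle is verifying that the quotient construction in (iii) really is a bijection between pairs $(W,\overline{Y})$ with the stated properties and the $i$-subspaces $Y$ with $\dim(X\cap Y)=m$. Surjectivity is immediate by setting $W = X\cap Y$, whereas injectivity relies on the standard correspondence theorem: a subspace $\overline{Y}\subseteq V/W$ has a unique preimage $Y\subseteq V$ containing $W$ under the projection $V\to V/W$. One also has to confirm that $X\cap Y = W$ rather than something strictly larger, which is exactly the condition $\overline{X}\cap\overline{Y}=\{0\}$ imposed in the quotient. Once this bijection is secured, parts (i) and (ii) supply all the arithmetic.
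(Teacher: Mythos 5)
Your proposal is correct. Note that the paper itself gives no proof of this theorem: it is quoted as a standard fact about subspaces of $V(q,n)$ (the kind of statement found in Brouwer--Haemers or Godsil--Royle), so there is no argument in the paper to compare against. Your route is the classical one: double counting of ordered independent tuples for (i) and (ii), and reduction of (iii) to (ii) by quotienting out $W=X\cap Y$. The arithmetic in (ii) checks out: $\prod_{t=0}^{i-1}(q^n-q^{j+t})=q^{ij+\binom{i}{2}}\prod_{t=0}^{i-1}(q^{n-j-t}-1)$ divided by $\prod_{t=0}^{i-1}(q^i-q^t)=q^{\binom{i}{2}}\prod_{s=1}^{i}(q^s-1)$ is indeed $q^{ij}{n-j\brack i}$. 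The only step worth spelling out explicitly in (ii) is that the set of admissible tuples is \emph{exactly} the disjoint union, over valid $Y$, of the ordered bases of $Y$: one direction is that an admissible tuple spans a $Y$ with $\dim(X+Y)=i+j$, hence $X\cap Y=0$; the other is that if $X\cap Y=0$ and $y_t\in X+\langle y_1,\dots,y_{t-1}\rangle$ for some ordered basis of $Y$, then writing $y_t=x+z$ forces $x\in X\cap Y=0$ and contradicts independence. Likewise in (iii) your appeal to the correspondence theorem, together with the identity $(X/W)\cap(Y/W)=(X\cap Y)/W$ for subspaces containing $W$, is exactly what makes the count multiplicative; since $W$ is recovered from $Y$ as $X\cap Y$, no $Y$ is counted twice. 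This is a complete and standard proof of the stated theorem.
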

Now, we can deduce the following fact from $(iii)$ of Theorem 1.1.
\begin{cor}If $X$ is a $k$-space of   $V$, then there are
precisely\\ $q^{(k+1-k)(k-k)}{n-k \brack k+1-k}{k \brack k}$=$n-k \brack 1$, $(k+1)$-spaces $Y$ in $V $ such that $X \leq  Y$.

\end{cor}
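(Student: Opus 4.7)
The statement is an immediate specialization of Theorem 1.1(iii), so my plan is simply to identify the right parameters and verify that the resulting formula collapses as claimed. The key observation is that for a $k$-space $X$ and a $(k+1)$-space $Y$, the containment $X \leq Y$ is equivalent to $\dim(X \cap Y) = k$, since $X \cap Y$ is a subspace of $X$ of maximal possible dimension precisely when $X \subseteq Y$. This converts the problem of counting $(k+1)$-spaces containing $X$ into a special case of counting $(k+1)$-spaces meeting $X$ in a prescribed dimension.

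Concretely, in the formula of Theorem 1.1(iii) I would set $j = k$ (the dimension of $X$), $i = k+1$ (the dimension of $Y$), and $m = k$ (the dimension of the intersection). Substituting yields
\[
q^{(i-m)(j-m)}{n-j \brack i-m}{j \brack m} \;=\; q^{(k+1-k)(k-k)}{n-k \brack k+1-k}{k \brack k}.
\]
The exponent $(k+1-k)(k-k)=0$ gives the scalar $1$, and ${k \brack k}=1$ by the convention that a $k$-space has a unique $k$-subspace (itself). What remains is ${n-k \brack 1}$, which is exactly the claimed count.

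I do not anticipate any real obstacle: the entire argument is a bookkeeping exercise verifying that $X \leq Y$ corresponds to the extreme case $m=j$ in Theorem 1.1(iii), and then simplifying the three factors. The only conceptual point worth emphasizing in the write-up is the equivalence between ``$Y$ contains $X$'' and ``$\dim(X \cap Y) = \dim X$'', which justifies invoking part (iii) with $m = j$.
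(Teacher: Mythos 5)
Your proposal is correct and is essentially the paper's own argument: the paper likewise observes that $X \leq Y$ is equivalent to $Y \cap X$ being the $k$-space $X$ itself, and then invokes Theorem 1.1(iii) with $j=k$, $i=k+1$, $m=k$. Your write-up merely spells out the simplification of $q^{0}{k \brack k}{n-k \brack 1}$ more explicitly than the paper does.
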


\begin{proof} We know  that a $(k+1)$-space $Y$ contains the $k$-space $X$ if and only if $Y \cap X$ is the $k$-space $X$. Now the result follows from Theorem 1.1.

\end{proof}

\begin{defin}
Let $n,k$ be positive integers such that $n\geq 3$, $k<n$. Let $q$ be a power of a prime $p$ and $\mathbb{F}_q$ be a finite field of order $q$. Let $V(q,n)$ be a vector space of dimension $n$ over $\mathbb{F}_q$. We define the graph $S(q,n,k)$ as  a  graph with the vertex-set $V=V_k \cup V_{k+1}$, where $V_k$ and $V_{k+1}$ are the family of subspaces in $V(q,n)$ of dimension $k$ and $k+1$ respectively,     in which two vertices $v$ and $w$ are adjacent whenever $v$ is a subspace of $w$  or $w$ is a subspace of $v$.
\end{defin}

    It is clear that this graph  is a bipartite graph with partition  $V=V_k\cup V_{k+1}$,
$$V_k=\{v\subset V(q,n) | dim(v)=k\},$$
$$V_{k+1}=\{v\subset  V(q,n) |dim(v)=k+1\}.$$

When $n=2k+1$, the  graph $S(q,n,k)$ is called a doubled Grassmann graph and some of its properties have been studied by some authors [4,7]. The automorphism group of the graph $S(q,n,k)$ has recently been determined [8]. We know that the number of $k$-subspaces of a vector space $V(q,n)$   is the Gaussian binomial coefficient
${n\brack k}_q={n\brack k}$.
Thus, $|V_k|={n \brack k}$ and $|V_{k+1}|={n\brack k+1}$,   hence the order of  the graph $S(q,n,k)$ is ${n \brack k}+{n \brack k+1}$. If $v\in V_k$, then from Corollary 1.2, it follows that   $deg(v)={n-k\brack 1}=\frac{q^{n-k}-1}{q-1}=q^{n-k-1}+q^{n-k-2}+\dots+q+1$. We know that the number of $k$-subspaces of a vector space of dimension $k+1$ is the Gaussian binomial coefficient ${k+1\brack k}$, hence if $v\in V_{k+1}$, then   $deg(v)={k+1\brack k}={k+1\brack 1}$=$\frac{q^{k+1}-1}{q-1}$.
When a bipartite graph $G=(P,E)$ is such that  $P=P_1 \cup P_2$, $P_1 \cap P_2= \emptyset $  $|P_1|=n_1, |P_2|=n_2$,  also each vertex in $V_1$ is of degree $r_1$ and each vertex in $P_2$ is of degree $r_2$, then the graph $G$ is called a bi-regular bipartite graph with parameters $(n_1,n_2,r_1,r_2)$.
  Thus the graph $S(q,n,k)$ is a  bi-regular  bipartite graph with parameters $({n\brack k} ,{n\brack k+1} ,{n-k\brack 1} , {k+1\brack 1})$. It can be shown that the graph $S(q,n,k)$ is  connected  (Proposition 2.1). We can easily see that this graph  is a regular graph, when  $n=3, k=1$,  since $r_1=r_2=q+1$ and in this case $|V_1|=|V_2|=q^2+q+1$.
   Noting that ${n\brack k}={n\brack n-k}$, it is easy to show that $S(q,n,k) \cong S(q,n,n-k-1)$, hence in the sequel we assume $k < \frac{n}{2}$. \

Let $\Gamma$ be a graph with vertex set $V=\{v_1,v_2,\dots,v_n\}$ and edge set $E(\Gamma)$. The adjacency matrix $A=A(\Gamma)=[a_{ij}]$ of $\Gamma$ is an $n\times n$ symmetric matrix of $0$'s and $1$'s with $a_{ij}=1$ if and only if $v_i$ and $v_j$ are adjacent. The characteristic polynomial of $\Gamma$ is the polynomial $P(G)=P(G,x)=det(xI_n-A)$, where $I_n$ denotes the $n\times n$ identity matrix. The spectrum of $A(\Gamma)$ is also called the spectrum of $\Gamma$. If the eigenvalue of $\Gamma$ are ordered by $\lambda_1>\lambda_2>\dots>\lambda_r$, and their multiplicities are $m_1$,$m_2$,$\dots$,$m_r$, respectively, then we write ;
\begin{equation*}
Spec(\Gamma)=\begin{pmatrix}
\lambda_1,\lambda_2,\dots,\lambda_r\\
m_1,m_2,\dots,m_r
\end{pmatrix}
\hspace{0.5 cm}
or
\hspace{0.5 cm}
Spec(\Gamma)=\big\lbrace\lambda_1^{m_1},\lambda_2^{m_2},\dots,\lambda_r^{m_r}\big\rbrace
\end{equation*}

Let $\mathbb{F}_q$ be a finite field of order $q$ and let $V(q,n)$ be an  $n$-dimensional vector space over the field $\mathbb{F}_q$. The Grassmann graph $G(q,n,k)$  is a graph whose vertex-set is    the family of  $k$-subspaces of $V(q,n)$, in which two vertices $v$ and $w$   are adjacent if and only if $dim(v\cap w)=k-1$. Concerning the spectrum of the Grassmann graph   $G(q,n,k)$, we have the following fact [3].
\begin{thm}\label{t1}
Let $\Gamma$ be the Grassmann graph $G(n,q,k)$.  Then $\Gamma$ has diameter $d=min(k,n-k)$. Moreover,  $\Gamma$ has eigenvalues and multiplicities given by
$$\theta_j=q^{j+1}{k-j\brack 1}{n-k-j\brack1}-{j\brack 1}$$
and
$$f_j={n\brack j}-{n\brack j-1}$$
where $0\leq j\leq d$.
\end{thm}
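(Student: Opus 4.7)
\medskip
\noindent\textbf{Proof proposal.} The plan is to exploit the distance-regularity of the Grassmann graph together with the transitive action of $GL_n(\mathbb{F}_q)$ on $V_k$, so that the eigenvalues come from a tridiagonal intersection matrix and the multiplicities come from representation theory.

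First, I would use Theorem 1.1(iii) to verify that the number of $k$-spaces $Y$ at distance $i$ from a fixed $k$-space $X$ (that is, with $\dim(X \cap Y) = k-i$) depends only on $i$, being equal to $q^{i^2}{k \brack i}{n-k \brack i}$. A routine flag count then extracts the intersection numbers $c_i = ({i \brack 1})^2$ and $b_i = q^{2i+1}{k-i \brack 1}{n-k-i \brack 1}$ for $0 \le i \le d = \min(k, n-k)$, which simultaneously establishes distance-regularity and the stated diameter.

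Second, since $\Gamma$ is distance-regular, the eigenvalues of $A(\Gamma)$ coincide with the eigenvalues of the $(d+1)\times(d+1)$ tridiagonal intersection matrix $L_1$ built from the $b_i,a_i,c_i$. I would then substitute each proposed $\theta_j = q^{j+1}{k-j \brack 1}{n-k-j \brack 1} - {j \brack 1}$ into the three-term recurrence defining the standard sequence of polynomials $u_i(\theta)$ and verify that $u_{d+1}(\theta_j)=0$; this reduces to a $q$-binomial identity. Third, for the multiplicities I would invoke the classical (multiplicity-free) decomposition of the permutation module $\mathbb{C}[V_k]$ under $GL_n(\mathbb{F}_q)$ into $d+1$ pairwise non-isomorphic irreducible constituents of dimensions ${n \brack j} - {n \brack j-1}$, $0 \le j \le d$, whose sum telescopes to $|V_k|={n\brack k}$. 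Since $A$ commutes with the $GL_n(\mathbb{F}_q)$-action, Schur's lemma forces each constituent to lie in a single eigenspace, and because the $\theta_j$ are distinct the dimensions must match up to give $f_j = {n \brack j} - {n \brack j-1}$.

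The main obstacle I expect is step two: showing that the closed-form $\theta_j$ really solves the tridiagonal eigenvalue problem. The required $q$-binomial gymnastics are standard but not entirely trivial, and one has to be careful at the boundary indices $i=0$ and $i=d$. A cleaner but less self-contained route is to recognise the $\theta_j$ as specializations of dual $q$-Hahn (or $q$-Eberlein) polynomials attached to the Grassmann scheme, which merely relocates the manipulation into a well-documented external identity rather than eliminating it.
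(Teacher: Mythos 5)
Your proposal cannot be compared against a proof in the paper, because the paper offers none: Theorem 1.4 is stated as a known fact and attributed to the literature (the reference given is [3], though the result is actually found in Brouwer--Haemers [4], Section 9.3, and in Brouwer--Cohen--Neumaier). What you have written is a correct outline of the standard argument that those sources use: Theorem 1.1(iii) with $j=i=k$ and $m=k-\ell$ does give $q^{\ell^2}{k\brack \ell}{n-k\brack \ell}$ spaces at distance $\ell$, the intersection numbers $c_i=\left({i\brack 1}\right)^2$ and $b_i=q^{2i+1}{k-i\brack 1}{n-k-i\brack 1}$ are right, and the eigenvalues of a distance-regular graph are indeed those of the tridiagonal intersection matrix, while the multiplicities follow from the multiplicity-free decomposition of $\mathbb{C}[V_k]$ under $GL_n(\mathbb{F}_q)$. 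The one soft spot is the final matching step: Schur's lemma plus the count of $d+1$ constituents versus $d+1$ distinct eigenvalues tells you that the eigenspaces \emph{are} the irreducible constituents, but it does not by itself tell you that the constituent of dimension ${n\brack j}-{n\brack j-1}$ carries the eigenvalue $\theta_j$ rather than some $\theta_{j'}$. To pin down the pairing you need either to compute $A$ on the image of the natural inclusion $\mathbb{C}[V_j]\to\mathbb{C}[V_k]$, or to use the multiplicity formula $f_j = |V_k|\bigl(\sum_i k_i u_i(\theta_j)^2\bigr)^{-1}$ from the standard sequence. With that step made explicit, your sketch is a complete and self-contained proof of a statement the paper takes on faith.
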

In this paper, we wish to  determine the spectrum of the graph $S(q,n,k)$.
%----------------------------------------------------------------------
%----------------------------------------------------------------------
%----------------------------------------------------------------------
\section{Main Results}
%----------------------------------------------------------------------
%----------------------------------------------------------------------
%----------------------------------------------------------------------
In this section, we study some properties of the graph $\Gamma=S(q,n,k)$. In the first step, we show that $\Gamma$ is a connected graph.

\begin{prop}\label{p1}
The graph $\Gamma=S(q,n,k)$  is a  connected  graph.
                                                                                                                                                                                                                       \end {prop}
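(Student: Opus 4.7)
The plan is to reduce connectedness of $\Gamma=S(q,n,k)$ to the (already known) connectedness of the Grassmann graph $G(q,n,k)$, and then bridge Grassmann-edges by $(k{+}1)$-subspaces.

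First I would observe that since $k+1 \le n$, every $(k{+}1)$-subspace $w$ contains at least one $k$-subspace of $V(q,n)$, which is therefore a neighbour of $w$ in $\Gamma$. Consequently, every vertex in $V_{k+1}$ has a neighbour in $V_k$, and it suffices to prove that any two vertices $u_1,u_2$ of $V_k$ lie in the same component of $\Gamma$.

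The key bridging observation is this: if $u,v \in V_k$ satisfy $\dim(u\cap v)=k-1$, then
\[
\dim(u+v)=\dim u+\dim v-\dim(u\cap v)=k+1,
\]
so $u+v\in V_{k+1}$, and both $u$ and $v$ are subspaces of $u+v$. Hence $u$ and $v$ are joined in $\Gamma$ by the length-two path $u \sim (u+v) \sim v$. Now I would invoke Theorem 1.3 (or the well-known fact cited there): the Grassmann graph $G(q,n,k)$ is connected of diameter $\min(k,n-k)$. Thus for any two $k$-subspaces $u_1,u_2$ there is a Grassmann path $u_1=w_0,w_1,\dots,w_r=u_2$ with $\dim(w_{i-1}\cap w_i)=k-1$ for every $i$. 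Replacing each Grassmann edge $w_{i-1}w_i$ by the $\Gamma$-path $w_{i-1}\sim (w_{i-1}+w_i)\sim w_i$ produces a walk in $\Gamma$ from $u_1$ to $u_2$. This proves connectedness.

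The argument is almost entirely a bookkeeping translation, so no serious obstacle arises; the only delicate point is the existence of the Grassmann path joining $u_1$ and $u_2$, which I would simply cite from Theorem 1.3 rather than reprove. Everything else is the dimension formula $\dim(u+v)=\dim u+\dim v-\dim(u\cap v)$ and the trivial remark that every $(k{+}1)$-subspace contains a $k$-subspace.
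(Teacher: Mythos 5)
Your proof is correct, but it takes a different route from the paper's. The paper gives a self-contained argument: it inducts on $j=k-\dim(v_1\cap v_2)$, constructing from bases of $v_1$ and $v_2$ an intermediate $k$-subspace $s$ with $\dim(s\cap v_2)=k-1$ and $\dim(s\cap v_1)=k-i$, and only handles the base case $j=1$ by the join $v_1+v_2$ — in effect it reproves the connectedness of the Grassmann graph inside the bipartite setting. You instead take that connectedness as given (it is available in the paper, though as Theorem 1.4, not 1.3: the statement that $G(q,n,k)$ has finite diameter $\min(k,n-k)$, together with the remark that the Grassmann distance is $k-\dim(v\cap w)$) and observe that every Grassmann edge $uv$ lifts to the $2$-path $u\sim(u+v)\sim v$ in $\Gamma$, while every vertex of $V_{k+1}$ has a neighbour in $V_k$. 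Your version is shorter and cleanly modular — the only new content is the edge-lifting observation, which is exactly the base case of the paper's induction — at the cost of importing the external fact; the paper's version is longer but needs no appeal to prior knowledge about $G(q,n,k)$ beyond linear algebra. Both arguments are sound.
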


                                                                                                                                      \begin{proof} It is clear that the graph $G=S(q,n,k)$ is a bipartite graph with partition $V_k \cup V_{k+1}$.  We now show that $G$ is a connected graph. It is sufficient to show that if $v_1,v_2$ are two vertices in $V_{k}$, then there is a path in $G$ between $v_1$ and $v_2$. Let $dim(v_1 \cap v_2)=k-j$, $1 \leq j \leq k$.
                                                                                                                                      We prove our assertion by induction on $j$. If $j=1$, then $u=v_1+v_2$ is a subspace of $V(q,n)$ of dimension $k+k-(k-1)=k+1$, which contains both of $v_1$ and $v_2$.   Hence,  $u \in V_{k+1}$ is adjacent to both of the vertices $v_1$ and $v_2$.
                                                                                                                                       Thus, if $j=1$, then there is a path between $v_1$ and $v_2$ in the graph $G$. Assume when $j=i$,  $0 < i <k$, then there is a path in $G$ between $v_1$ and $v_2$. We now assume $j=i+1$. Let $v_1 \cap v_2=w$, and let $B=\{ b_1,...,b_{k-i-1}  \}$ be a basis for the subspace $w$ in the space $V(q,n)$. We can extend $B$ to   bases $B_1$ and $B_2$ for the subspaces
                                                                                                                                        $v_1$ and $v_2$, respectively. Let $B_1= \{ b_1,...,b_{k-i-1}, c_1,...,c_{i+1}  \}$ be a basis for $v_1$ and $B_2= \{ b_1,...,b_{k-i-1}, d_1,...,d_{i+1}  \}$ be a basis for $v_2$. Consider the subspace $s=<b_1,...,b_{k-i-1}, c_1,d_2,...,d_{i+1}>$.
                                                                                                                                         Then $s$ is a $k$-subspace of the space $V(q,n)$ such that $dim(s \cap v_2)=k-1$ and $dim(s \cap v_1)=k-i$. Hence by the induction assumption, there is a path $P_1$ between vertices $v_2$ and $s$, and a path $P_2$ between vertices $s$ and $v_1$. We now conclude that there is a path in the graph $G$ between vertices  $v_1$ and $v_2$.
                                                                                                                                         \end{proof}

We now proceed to determine the spectrum of the graph $S(q,n,k)$.

\begin{thm}
Let $V(q,n)$ be a vector space of dimension $n$ over a field $\mathbb{F}_q$, where $q$ is a power  of a prime $p$. Let $k < \frac{n}{2}$ and  $\Gamma=S(q,n,k)$. Then, the graph $\Gamma$ has distinct eigenvalues $ \pm \frac{1}{(q-1)} \sqrt{\theta_j}$, where
$$\theta_j=(q^{(n-j+1)}-q^{(n-k)}+q^j-q^{(k+1)}), $$
with the multiplicity  $f_j$,
$$f_j=2({n\brack j}-{n\brack j-1}), \    0\leq j\leq k,$$
and $$\theta_{k+1}=0$$
with   the multiplicity
$$f_{k+1}={n\brack k+1}-{n\brack k}.$$

\end{thm}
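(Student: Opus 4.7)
The plan is to exploit the bipartite structure of $\Gamma=S(q,n,k)$. Ordering the vertices with $V_k$ first, the adjacency matrix takes the block form
\[
A \;=\; \begin{pmatrix} 0 & B \\ B^{T} & 0 \end{pmatrix},
\]
where $B$ is the $|V_k|\times|V_{k+1}|$ containment--incidence matrix. Since $A^{2}=\mathrm{diag}(BB^{T},B^{T}B)$ and these two blocks share the same nonzero spectrum, every nonzero eigenvalue $\mu$ of $BB^{T}$ produces a symmetric pair $\pm\sqrt{\mu}$ of eigenvalues of $A$ with the same multiplicity, while the eigenvalue $0$ of $A$ has multiplicity $|V_{k+1}|-|V_k|$ plus twice the multiplicity of $0$ in $BB^{T}$.

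I would then compute $BB^{T}$ combinatorially. For $v_1,v_2\in V_k$, the entry $(BB^{T})_{v_1,v_2}$ counts $(k{+}1)$-spaces $w$ containing both. When $v_1=v_2$, Corollary~1.2 gives ${n-k \brack 1}$; when $v_1\neq v_2$, the only candidate is $v_1+v_2$, which has dimension $k+1$ precisely when $\dim(v_1\cap v_2)=k-1$. Hence
\[
BB^{T} \;=\; {n-k \brack 1}\, I \;+\; A\!\left(G(q,n,k)\right).
\]
Applying Theorem~\ref{t1} to $A(G(q,n,k))$, the eigenvalues of $BB^{T}$ are
\[
\mu_j \;=\; {n-k \brack 1} + q^{j+1}{k-j \brack 1}{n-k-j \brack 1} - {j \brack 1},\qquad 0\le j\le k,
\]
each with multiplicity ${n \brack j}-{n \brack j-1}$.

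The heart of the argument---and what I expect to be the main obstacle---is the algebraic identity
\[
(q-1)^{2}\mu_j \;=\; q^{n-j+1}-q^{n-k}+q^{j}-q^{k+1} \;=\; (q^{k-j+1}-1)(q^{n-k}-q^{j}) \;=\; \theta_j,
\]
which follows by a direct expansion using ${m \brack 1}=(q^{m}-1)/(q-1)$. Since $k<n/2$ and $0\le j\le k$, both factors in the product are strictly positive and strictly decreasing in $j$, so the $\mu_j$ are positive and pairwise distinct. Taking square roots yields the claimed distinct eigenvalues $\pm\tfrac{1}{q-1}\sqrt{\theta_j}$ of $\Gamma$, each with multiplicity ${n \brack j}-{n \brack j-1}$, so that the pair contributes $f_j=2({n \brack j}-{n \brack j-1})$ to the vertex count. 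Because $\mu_j>0$ throughout the range, $0$ occurs in $BB^{T}$ with multiplicity $0$, and so the eigenvalue $0$ of $\Gamma$ has multiplicity exactly $|V_{k+1}|-|V_k|={n \brack k+1}-{n \brack k}=f_{k+1}$, consistent with $\theta_{k+1}=0$ read from the same formula. A telescoping sum confirms the total multiplicity equals $|V_k|+|V_{k+1}|$.
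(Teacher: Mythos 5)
Your proposal is correct, and it reaches the result by a route that overlaps with the paper's only in its first half. The paper also works with $A^2$ and identifies the diagonal block on $V_k$ as ${n-k\brack 1}I+A(G(q,n,k))$ exactly as you do, but it then computes the second diagonal block explicitly as ${k+1\brack k}I+A(G(q,n,k+1))$, invokes Theorem~\ref{t1} a second time for $G(q,n,k+1)$, and must verify by a separate calculation that the resulting eigenvalues $\gamma_i$ of that block coincide with the $\theta_j$ of the first block precisely when $i=j$, with the leftover $\gamma_{k+1}=0$ supplying the kernel. You bypass all of that by writing $A^2=\mathrm{diag}(BB^{T},B^{T}B)$ and using the standard fact that $BB^{T}$ and $B^{T}B$ have the same nonzero spectrum with equal multiplicities, so only the $V_k$ block ever needs to be diagonalized; the multiplicity ${n\brack k+1}-{n\brack k}$ of the eigenvalue $0$ then comes from rank--nullity once you know $BB^{T}$ is nonsingular. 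Your factorization $(q-1)^2\mu_j=(q^{k-j+1}-1)(q^{n-k}-q^{j})$ (which checks out) is a genuine improvement on both counts: it gives positivity of every $\mu_j$ (hence nonsingularity of $BB^{T}$, which the paper never explicitly addresses) and strict monotonicity in $j$ (hence the pairwise distinctness asserted in the theorem statement, which the paper leaves at ``it is easy to check''). The trade-off is that the paper's version is self-contained at the level of matrix blocks and makes the appearance of the second Grassmann graph $G(q,n,k+1)$ visible, whereas yours is shorter, needs only one application of Theorem~\ref{t1}, and pins down the two points the paper glosses over.
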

\begin{proof}
Let $A$ be the adjacency matrix of the graph $\Gamma$. In the first step, we determine the spectrum of the matrix $A^2$. Let $(A^2)_{v,w}$ be the entry in the $v$th row and $w$th column of $A^2$. Note that $(A^2)_{v,w}$ is the number of $2$-paths between the vertices $v$ and $w$. We have the following cases.\\
(i) If $v=w$, then $(A^2)_{v,w}$ is the number of neighbors of $v$, hence we have;
\[
(A^2)_{v,v}=
\begin{cases}
{n-k\brack 1},  \  if \ v\in V_k\\
\hspace{0.2 cm}{k+1\brack k},   \  if \ v\in V_{k+1}

\end{cases}
\]
(ii) Let $v\neq w$ and $v\in V_k$ and $w\in V_{k+1}$  (or vice versa). Thus,  there is not $2$-path between $v$ and $w$ since the graph $\Gamma$ is a bipartite graph.  Indeed, we have $(A^2)_{v,w}=0$. \\
(iii) Let $v\neq w$, $v,w\in V_k $ and  $P:vuw$ be a $2$-path in the graph $\Gamma$. Thus $u\in V_{k+1}$ is a $(k+1)$-subspace of $V(q,n)$ such that it contains both of $k$-subspaces $v$ and $w$.  Thus $u$ must contain the subspace $v+w$. From the fact that $dim(v+w)=dim(v)+dim(w)-dim(u \cap w)$,  it follows that $dim(v \cap w)=k-1$, and $u=v+w$. In other words,  there is exactly 1 path of length 2 between $v$ and $w$ if and only if $dim(v \cap w)=k-1$.    We now, consider $v,w\in V_k$ as vertices of Grassmann graph $G(q,n,k)$. Let $G_k$ be the adjacency matrix of $G(q,n,k)$, then
\begin{center}
$ (A^2)_{v,w}=1$ if and only if  $(G_k)_{v,w}=1$, and $ (A^2)_{v,w}=0$ if and only if  $(G_k)_{v,w}=0$
\end{center}
where $v,w\in V_k$ and $v\neq w$.\\
(iv) We now, consider $v,w\in V_{k+1}$ as vertices of Grassman graph $G(q,n,k+1)$ with vertex set $V_{k+1}$. By a similar argument which we saw in (iii), we have the following fact,\\
there is a path of length two (in the graph) between $v,w\in V_{k+1}$ if and only if $dim(v\cap w)=k$.\\
Therefore, there is a $2$-path between $v,w\in V_{k+1}$ as vertices of $\Gamma$ if and only if $v,w$ are adjacent as vertices of $G(q,n,k+1)$. Let $G_{k+1}$ be the adjacency matrix of the Grassmann graph $G(q,n,k+1)$, then
\begin{center}
$ (A^2)_{v,w}=1$ if and only if $ (G_{k+1})_{v,w}=1$, and $ (A^2)_{v,w}=0$ if and only if $ (G_{k+1})_{v,w}=0$.
\end{center}
where $v,w\in V_{k+1}$ and $v\neq w$.\\
By our discussion we deduce that,
\[
A^2=
\begin{bmatrix}
{n-k\brack 1}I_r+G_k & 0\\
0 & {k+1\brack k}I_s+G_{k+1}
\end{bmatrix}
\]

where $r={n\brack k}$ and $s={n\brack k+1}$. We now can determine the characteristic polynomial of the matrix $A^2$.
$$P(A^2)=det(\lambda I-A^2)=det(\lambda I_r-{n-k\brack 1}I_r -G_k)det(\lambda I_s-{k+1\brack k}I_s - G_{k+1}).$$
 We know from Theorem 1.4,  the spectrum of  the Grassmann graph $G(q,n,k)$.
 We now deduce  that  the eigenvalues of the matrix  $A^2$ are,\\
$$\theta_j=q^{j+1}{k-j\brack 1}{n-k-j\brack1}-{j\brack 1}+{n-k\brack 1}$$
with the multiplicity
$$f_j={n\brack j}-{n\brack j-1},  \ 0\leq j\leq k$$

and \\
$$\gamma_i=q^{i+1}{k+1-i\brack 1}{n-k-1-i\brack1}-{i\brack 1}+{k+1\brack k}$$
with the multiplicity
$$e_i={n\brack i}-{n\brack i-1}, \ 0\leq i\leq k+1. $$
On the other hand, we have,
$$\theta_j= q^{j+1}{k-j\brack 1}{n-k-j\brack1}-{j\brack 1}+{n-k\brack 1}=$$$$q^{j+1}(\frac{q^{k-j}-1}{q-1}\times \frac{q^{n-k-j}-1}{q-1})-\frac{q^j-1}{q-1}+\frac{q^{n-k}-1}{q-1})=$$
$$ \frac{1}{(q-1)^2}(q^{j+1}(q^{k-j}-1)(q^{n-k-j}-1)-(q^j-1)(q-1)+(q^{n-k}-1)(q-1))=$$
$$\frac{1}{(q-1)^2} (q^{j+1}(q^{n-2j}-q^{k-j}-q^{n-k-j}+1)-q^{j+1}+q^j+q-1+q^{n-k+1}-q^{n-k}-q+1)= $$
$$\frac{1}{(q-1)^2}(q^{n-j+1}-q^{k+1}-q^{n-k+1}+q^{j+1}-q^{j+1}+q^j+q-1+q^{n-k+1}-q^{n-k}-q+1)=    $$
$$\frac{1}{(q-1)^2} (q^{n-j+1}-q^{k+1}+q^j-q^{n-k}).$$
Also, we have
$$\gamma_i= q^{i+1}{k+1-i\brack 1}{n-k-1-i\brack1}-{i\brack 1}+{k+1\brack k}=$$
$$q^{i+1}(\frac{q^{k+1-i}-1}{q-1}\times\frac{q^{n-k-1-i}-1}{q-1})-\frac{q^i-1}{q-1}+\frac{q^{k+1}-1}{q-1}=$$
$$ \frac{1}{(q-1)^2}(q^{i+1}(q^{n-2i}-q^{k+1-i}-q^{n-k-1-i}+1)-q^{i+1}+q^i+q-1+q^{k+2}-q^{k+1}-q+1)=  $$
$$\frac{1}{(q-1)^2}(q^{n-i+1}-q^{k+2}-q^{n-k}+q^{i+1}-q^{i+1}+q^i+q-1+q^{k+2}-q^{k+1}-q+1)=$$
$$\frac{1}{(q-1)^2} (q^{n-i+1}-q^{n-k}+q^i-q^{k+1}).  $$
Now, it is easy to check that for $0\leq i,j\leq k$, we have $\theta_j=\gamma_i$ if and only if $i=j$. Also,  if $i=k+1$, then $\gamma_i=\frac{1}{(q-1)^2} (q^{n-k-1+1}-q^{n-k}+q^{k+1}-q^{k+1})=0. $ Since the eigenvalues of $A^2$ are squares of the eigenvalues of $A$ and since $S(q,n,k)$ is a bipartite graph, then each eigenvalue  of $\Gamma=S(q,n,k)$ is of the form, \\

$\pm\frac{1}{(q-1)} \sqrt{\theta_j}, \  $with multiplicity$ \  f_j=2({n\brack j}-{n\brack j-1}), 0 \leq j \leq k, $ \\ and  \\

$\gamma_{k+1}=0,  $ with multiplicity$ \ f_{k+1}={n\brack k+1}-{n\brack k}. $

\end{proof}
%----------------------------------------------------------------------
%----------------------------------------------------------------------
%----------------------------------------------------------------------

%----------------------------------------------------------------------
%----------------------------------------------------------------------
%----------------------------------------------------------------------
 \

\end{document}